\numberwithin{equation}{section}
\newtheorem{theorem}{Theorem}[section]
\newtheorem{corollary}[theorem]{Corollary}
\newtheorem{proposition}[theorem]{Proposition}
\theoremstyle{remark}
\newtheorem{remark}{Remark}
\newcommand{\Z}{\mathbb{Z}}
\newcommand{\R}{\mathbb{R}}
\newcommand{\bp}{\bm{p}}
\newcommand{\PSL}{\mathrm{PSL}}
\newcommand{\tr}{\mathrm{tr}}
\newcommand{\norm}[1]{\left\lVert#1\right\rVert}
\renewcommand{\epsilon}{\varepsilon}
\renewcommand{\Re}{\mathrm{Re}}
\renewcommand{\tilde}{\widetilde}
\renewcommand{\pmod}[1]{\, (\mathrm{mod} {\, #1})}
\patchcmd{\section}{\scshape}{\bfseries}{}{}
\makeatletter\renewcommand{\@secnumfont}{\bfseries}\makeatother
\begin{document}

\title{The Prime Geodesic Theorem in Arithmetic Progressions}

\author{Dimitrios Chatzakos}
\address{Department of Mathematics, University of Patras, 26 110, Patras, Greece}
\urladdr{\href{https://dchatzakos.math.upatras.gr}{https://dchatzakos.math.upatras.gr}}
\email{dimitris.xatzakos@gmail.com}

\author{Gergely Harcos}
\address{Alfr\'{e}d R\'{e}nyi Institute of Mathematics, POB 127, Budapest H-1364, Hungary}
\urladdr{\href{https://users.renyi.hu/~gharcos/}{https://users.renyi.hu/~gharcos/}}
\email{gharcos@renyi.hu}

\author{Ikuya Kaneko}
\address{The Division of Physics, Mathematics and Astronomy, California Institute of Technology, 1200 E. California Blvd., Pasadena, CA 91125, USA}
\urladdr{\href{https://sites.google.com/view/ikuyakaneko/}{https://sites.google.com/view/ikuyakaneko/}}
\email{ikuyak@icloud.com}

\thanks{The first author acknowledges the financial support from the ELKE of the University of Patras (MEDIKOS Program no. 82043). The second author was supported by the R\'enyi Int\'ezet Lend\"ulet Automorphic Research Group and NKFIH (National Research, Development and Innovation Office) grant K~143876. The third author acknowledges the support of the Masason Foundation.}

\subjclass[2020]{11F72 (primary); 11M36 (secondary)}

\keywords{prime geodesic theorem, arithmetic progressions, Kuznetsov--Bykovski\u{\i} formula}

\date{\today}

\dedicatory{}

\begin{abstract}
We address the prime geodesic theorem in arithmetic progressions, and resolve conjectures of Golovchanski\u{\i}--Smotrov (1999). In particular, we prove that the traces of closed geodesics on the modular surface do not equidistribute in the reduced residue classes of a given modulus.
\end{abstract}

\maketitle

\section{Introduction}\label{introduction}

\subsection{Historical prelude}\label{historical-prelude}
The prime geodesic theorem asks for an asymptotic evaluation~of the counting function of oriented primitive closed geodesics on hyperbolic manifolds. If the underlying group is a cofinite Fuchsian group $\Gamma \subset \PSL_{2}(\R)$, then this problem has received distinguished attention among number theorists; see for instance~\cite{Hejhal1976-2,Hejhal1976,Hejhal1983,Huber1961,Huber1961-2,Kuznetsov1978,Sarnak1980,Selberg1989,Venkov1990} and references therein for classical triumphs.

A closed geodesic $P$ on $\Gamma \backslash \mathbb{H}$ corresponds bijectively to a hyperbolic conjugacy class in $\Gamma$ (cf.~\cite{Huber1959}). If $\mathrm{N}(P)$ denotes the norm of this conjugacy class, then the hyperbolic length of $P$ equals $\log\mathrm{N}(P)$. As usual, we set $\Lambda_{\Gamma}(P) = \log \mathrm{N}(P_{0})$, where $P_0$ is the primitive closed geodesic underlying $P$, and we introduce the Chebyshev-like counting function
\begin{equation*}
\Psi_{\Gamma}(x) \coloneqq \sum_{\mathrm{N}(P) \leq x} \Lambda_{\Gamma}(P).
\end{equation*}
Because the prime geodesic theorem is reminiscent of the prime number theorem, the norms are sometimes called pseudoprimes. Selberg~\cite{Selberg1989} established an asymptotic formula of the shape
\begin{equation}\label{eq:Selberg}
\Psi_{\Gamma}(x) = \sum_{\frac{1}{2} < s_{j} \leq 1} \frac{x^{s_{j}}}{s_{j}}+\mathcal{E}_{\Gamma}(x),
\end{equation}
where the main term emerges from the small eigenvalues $\lambda_{j} = s_{j}(1-s_{j}) < \frac{1}{4}$ of the Laplacian on $\Gamma \backslash \mathbb{H}$ for the upper half-plane $\mathbb{H}$, and $\mathcal{E}_{\Gamma}(x)$ is an error term. It is known that $\mathcal{E}_{\Gamma}(x) \ll_{\Gamma,\epsilon} x^{\frac{3}{4}+\epsilon}$; see the explicit formul{\ae} in~\cite{Iwaniec1984,KanekoKoyama2022}. This barrier is often termed the trivial bound. Given an analogue of the Riemann Hypothesis for Selberg zeta functions apart from a finite number of the exceptional zeros, one should expect the best possible estimate to be $\mathcal{E}_{\Gamma}(x) \ll_{\Gamma,\epsilon} x^{\frac{1}{2}+\epsilon}$. This remains unresolved due to the abundance of Laplace eigenvalues.

If $\Gamma$ is arithmetic, then an improvement over the barrier was achieved by Iwaniec~\cite{Iwaniec1984}, who proved that $\mathcal{E}_{\Gamma}(x) \ll_{\epsilon} x^{\frac{35}{48}+\epsilon}$ for the full modular group $\Gamma = \PSL_{2}(\Z)$. Iwaniec~\cite{Iwaniec1984-2} stated that the stronger exponent $\frac{2}{3}+\epsilon$ follows from the Generalised Lindel\"{o}f Hypothesis~for quadratic Dirichlet $L$-functions. Furthermore, under the Generalised Lindel\"{o}f Hypothesis for Rankin--Selberg $L$-functions, the same exponent follows from the Weil bound for Kloosterman sums. Luo--Sarnak~\cite{LuoSarnak1995} then strengthened the machinery of Iwaniec to obtain the exponent $\frac{7}{10}+\epsilon$; see also~\cite{Koyama1998,LuoRudnickSarnak1995}. As a further refinement, Cai~\cite{Cai2002} derived the exponent $\frac{71}{102}+\epsilon$. The crucial gist in all these works is to estimate nontrivially a certain spectral exponential sum via the Kuznetsov formula. On the other hand, the subsequent work of Soundararajan--Young~\cite{SoundararajanYoung2013} demonstrated that
\begin{equation*}
\mathcal{E}_{\Gamma}(x) \ll_{\epsilon} x^{\frac{2}{3}+\frac{\vartheta}{6}+\epsilon},
\end{equation*}
where $\vartheta$ is a subconvex exponent for quadratic Dirichlet $L$-functions. The current record~$\vartheta = \frac{1}{6}$ by Conrey--Iwaniec~\cite{ConreyIwaniec2000} implies the best known exponent $\frac{25}{36}+\epsilon$. The proof utilises the Kuznetsov--Bykovski\u{\i} formula (see~\cite{Bykovskii1994,Kuznetsov1978,SoundararajanYoung2013}), leaving the theory of Selberg zeta functions aside. For recent progress~on~the prime geodesic theorem and its generalisations, we direct the reader to~\cite{BalogBiroCherubiniLaaksonen2022,BalogBiroHarcosMaga2019,BalkanovaChatzakosCherubiniFrolenkovLaaksonen2019,BalkanovaFrolenkov2019,BalkanovaFrolenkov2020,BalkanovaFrolenkovRisager2022,ChatzakosCherubiniLaaksonen2022,CherubiniGuerreiro2018,CherubiniWuZabradi2022,DeverMilicevic2023,Kaneko2020,Kaneko2022-2,Kaneko2023-2,Koyama2001,PetridisRisager2017}.

\subsection{Statement of main results}\label{statement-of-main-results}
By a classical theorem of Dirichlet, the primes equidistribute in the reduced residue classes of a given modulus. As we shall see, the prime geodesic analogue of this phenomenon breaks down, and the corresponding non-uniform distribution can be determined explicitly.

We fix $\Gamma = \PSL_{2}(\Z)$. By definition, an element $P \in \Gamma$ is said to be hyperbolic if as a M\"{o}bius transformation
\begin{equation*}
Pz = \frac{az+b}{cz+d},
\end{equation*}
it possesses two distinct real fixed points. By conjugation, any hyperbolic element $P\in\Gamma$ may be expressed as $P = \sigma^{-1} \tilde{P} \sigma$, where $\sigma \in \PSL_{2}(\R)$ and $\tilde{P} = \begin{pmatrix} \lambda & 0 \\ 0 & \lambda^{-1} \end{pmatrix}$ with $\lambda > 1$. Here $\tilde{P}$ acts as multiplication by $\lambda^{2}$, namely $\tilde{P} z = \lambda^{2} z$. The factor $\lambda^{2}$ is called the norm of $P$, which depends only on the $\PSL_{2}(\R)$-conjugacy class of $P$. We note that the positive trace
\begin{equation*}
t = \tr(P) = \tr(\tilde{P}) = \lambda+\lambda^{-1}
\end{equation*}
is an integer exceeding $2$, and hence the norm takes the form
\begin{equation*}
\mathrm{N}(P) = \left(\frac{t+\sqrt{t^{2}-4}}{2} \right)^{2} = t^{2}-2+O(t^{-2}).
\end{equation*}
Given a prime $p \geq 2$, we define the Chebyshev-like counting function in arithmetic progressions modulo $p$ by
\begin{equation*}
\Psi_{\Gamma}(x; p, a) \coloneqq \sum_{\substack{\mathrm{N}(P) \leq x \\ \tr(P) \equiv a \pmod p}} \Lambda_{\Gamma}(P).
\end{equation*}

The following result shows that the main term in an asymptotic for $\Psi_{\Gamma}(x; p, a)$ depends on the residue class $a \pmod{p}$ unlike for primes in arithmetic progressions.

\begin{theorem}\label{main}
Let $\Gamma = \PSL_{2}(\Z)$, and let $p \geq 3$ be a prime. Then we have that
\begin{equation}\label{eq:asymptotic}
\Psi_{\Gamma}(x; p, a) = 
\begin{dcases}
\frac{1}{p-1} \cdot x+O_{\epsilon}(x^{\frac{3}{4}+\frac{\vartheta}{2}+\epsilon}) & \text{if $\left(\frac{a^{2}-4}{p} \right) = 1$,}\\
\frac{1}{p+1} \cdot x+O_{\epsilon}(x^{\frac{3}{4}+\frac{\vartheta}{2}+\epsilon}) & \text{if $\left(\frac{a^{2}-4}{p} \right) = -1$,}\\
\frac{p}{p^{2}-1} \cdot x+O_{p,\epsilon}(x^{\frac{3}{4}+\frac{\vartheta}{2}+\epsilon}) & \text{if $\left(\frac{a^{2}-4}{p} \right) = 0$,}
\end{dcases}
\end{equation}
where $\vartheta$ is a subconvex exponent for quadratic Dirichlet $L$-functions.
\end{theorem}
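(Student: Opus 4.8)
The plan is to convert the geodesic count into a sum of class numbers indexed by the trace, and then to study this sum along the progression $t \equiv a \pmod{p}$. Recall that the hyperbolic $\Gamma$-conjugacy classes of trace $t > 2$ correspond bijectively to the $\mathrm{SL}_{2}(\Z)$-classes of integral binary quadratic forms of discriminant $t^{2}-4$, whose number is a class number $H(t^{2}-4)$ of forms of that discriminant; the associated norm is a power of $\frac{t+\sqrt{t^{2}-4}}{2}$, which is the fundamental automorph $\epsilon_{t^{2}-4}$. Since $\mathrm{N}(P) = t^{2}-2+O(t^{-2})$, the condition $\mathrm{N}(P) \leq x$ amounts to $t \leq \sqrt{x}$ up to a negligible boundary, and the proper powers $P = P_{0}^{k}$ with $k \geq 2$ contribute only $O(x^{\frac{1}{2}+\epsilon})$, which I would absorb into the error term. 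The decisive simplification is that the regulator in the Dirichlet class number formula matches the weight $\Lambda_{\Gamma}$: for a primitive class $\Lambda_{\Gamma}(P) = 2\log\epsilon_{t^{2}-4}$, so summing over the $H(t^{2}-4)$ classes of trace $t$ gives
\[
\Lambda_{\Gamma}(P) \cdot H(t^{2}-4) \asymp 2\sqrt{t^{2}-4}\, L(1,\chi_{t^{2}-4}),
\]
the logarithmic factors cancelling. I would thus reduce the theorem to an asymptotic evaluation of $\sum_{t \leq \sqrt{x},\, t \equiv a \,(p)} 2\sqrt{t^{2}-4}\, L(1,\chi_{t^{2}-4})$.

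For the main term I would expand $L(1,\chi_{t^{2}-4}) = \prod_{\ell} (1-\chi_{t^{2}-4}(\ell)/\ell)^{-1}$ and observe that restricting to $t \equiv a \pmod{p}$ alters only the Euler factor at $p$: by the Chinese remainder theorem the residues of $t$ modulo the remaining primes are unbiased, whereas at $p$ the symbol $\chi_{t^{2}-4}(p) = \left(\frac{a^{2}-4}{p}\right)$ is frozen. Writing $g_{p}(a)$ for the resulting local factor, one finds in the unramified cases $g_{p}(a) = \bigl(1-\left(\frac{a^{2}-4}{p}\right)/p\bigr)^{-1}$, equal to $\frac{p}{p-1}$ when $\left(\frac{a^{2}-4}{p}\right) = 1$ and to $\frac{p}{p+1}$ when $\left(\frac{a^{2}-4}{p}\right) = -1$. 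The ramified classes $a \equiv \pm 2 \pmod{p}$ are more delicate: then $p \mid t^{2}-4$, the discriminant is non-fundamental at $p$, and only after summing the conductor contributions $p^{j} \,\|\, (t^{2}-4)$ does one obtain the enhanced factor $g_{p}(\pm 2) = \frac{p^{2}}{p^{2}-1}$. Since $\sum_{b \pmod{p}} g_{p}(b) = p$, normalising against the known asymptotic $\Psi_{\Gamma}(x) \sim x$ gives
\[
\Psi_{\Gamma}(x; p, a) \sim \frac{g_{p}(a)}{\sum_{b \pmod{p}} g_{p}(b)} \, \Psi_{\Gamma}(x) = \frac{g_{p}(a)}{p} \, x,
\]
which is exactly the trichotomy $\frac{1}{p-1}$, $\frac{1}{p+1}$, $\frac{p}{p^{2}-1}$ of \eqref{eq:asymptotic}; the identity $\sum_{a} g_{p}(a)/p = 1$ recovers the full count, consistent with the densities weighted by their $\frac{p-3}{2}$, $\frac{p-1}{2}$, $2$ residue classes summing to $1$.

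The error term is the crux. Here I would appeal to the Kuznetsov--Bykovski\u{\i} formula to rewrite the fluctuation of the class-number sum as an average of central values $L(\tfrac{1}{2},\chi_{t^{2}-4})$, whose conductors range up to $t^{2} \leq x$. After detecting the congruence $t \equiv a \pmod{p}$ by additive characters modulo $p$ and inserting the subconvex bound $L(\tfrac{1}{2},\chi_{D}) \ll_{\epsilon} |D|^{\vartheta+\epsilon}$ (through a single application, with the half-power $\vartheta/2$ emerging from a Cauchy--Schwarz step over the progression), the off-diagonal contribution is bounded by $x^{\frac{3}{4}+\frac{\vartheta}{2}+\epsilon}$; under the Lindel\"{o}f hypothesis $(\vartheta = 0)$ this reduces to the barrier $x^{\frac{3}{4}+\epsilon}$ familiar from the unconditional prime geodesic theorem. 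The principal obstacle, as I see it, is to carry out this estimate \emph{uniformly} in the modulus $p$ while cleanly separating it from the arithmetic main term; in the ramified classes $a \equiv \pm 2 \pmod{p}$ the local analysis at $p$ becomes entangled with the off-diagonal bound, which is precisely why the asymptotic there is stated only with a $p$-dependent implied constant $O_{p,\epsilon}$.
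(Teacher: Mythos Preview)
Your overall architecture---reduce to $\sum_{t\equiv a\,(p)}2\sqrt{t^2-4}\,L(1,t^2-4)$ via Sarnak's bijection and the class number formula, then isolate the local factor at $p$---matches the paper. But two of the steps you describe are heuristics rather than arguments, and in one place the mechanism you sketch is not the one that actually produces the exponent.

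First, the sentence ``by the Chinese remainder theorem the residues of $t$ modulo the remaining primes are unbiased'' is not a proof, and normalising by the known total $\Psi_\Gamma(x)\sim x$ is a consistency check, not a derivation of the constant. The paper makes this precise by \emph{removing} the Euler factor at $p$ from Zagier's $L$-series, i.e.\ setting $L^{p}(s,\delta)=\sum_{(q,p)=1}\lambda_q(\delta)q^{-s}$ and $\Psi_\Gamma^\star(x;p^n,r)=2\sum_{t\equiv r\,(p^n)}\sqrt{t^2-4}\,L^{p}(1,t^2-4)$, and then proving directly that $\Psi_\Gamma^\star(x;p^n,r)=x/p^n+O_\epsilon(x^{3/4+\vartheta/2+\epsilon})$, \emph{independently of the residue} $r$. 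Only after this does one reinstate the $p$-factor: for $a\not\equiv\pm2$ it is the constant $(1-(\tfrac{a^2-4}{p})p^{-1})^{-1}$, giving the first two cases at once; for $a\equiv\pm2$ one stratifies by $k=v_p(t-a)$, and each stratum is a union of $p-1$ residue classes modulo $p^{k+1}$ on which the $p$-Euler factor is again constant (and computable from~\eqref{eq4}). Summing the resulting geometric series in $k$ yields $p/(p^2-1)$. Your hint ``summing the conductor contributions $p^{j}\,\|\,(t^{2}-4)$'' is in the right direction, but note that it forces the auxiliary estimate to hold modulo $p^{n}$ for all $n$, not just modulo $p$; this is why the error in the third case depends on $p$.

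Second, the error term does not come from ``additive characters modulo $p$'' plus ``a Cauchy--Schwarz step''. In the paper (following Soundararajan--Young) one writes $\lambda_q(t^2-4)$ via Kloosterman sums, so that the $t$-sum in a fixed residue class is handled by the Weil bound; the congruence is imposed directly, not via characters. The subconvex exponent $\vartheta$ enters through a contour shift of a smoothed partial sum of $L^{p}(s,\delta)$ to $\Re s=\tfrac12$, giving an error $\ll x^{\vartheta+\epsilon}V^{-1/2}$ per term; balancing this against the $O(x^{1/2}V^{1/2+\epsilon})$ coming from the Kloosterman estimate is what produces the $\tfrac{\vartheta}{2}$, not Cauchy--Schwarz.
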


\begin{remark}
The implied constant in the error term is independent of $p$ when $a\not\equiv\pm 2\pmod{p}$.
\end{remark}

\begin{remark}When $p = 3$, the first case of \eqref{eq:asymptotic} is void, while the second case is covered with a stronger error term by~\cite[Theorem~$1$]{GolovchanskiiSmotrov1999}.
\end{remark}

\begin{remark}
Apart from the size of the error term, Theorem~\ref{main} resolves \cite[Conjecture~$2$]{GolovchanskiiSmotrov1999} for level $N=1$. In fact, we expect that our method works for $\Gamma=\Gamma_0(N)$ when $(N, p) = 1$, but we restrict to $\Gamma = \PSL_{2}(\Z)$ for simplicity. Furthermore, we expect that the error term can be improved significantly by a more careful analysis (e.g.\ by combining the Kuznetsov--Bykovski\u{\i} formula with an ad\`{e}lic trace formula), but we solely focused on determining the main term according to the sign of the Legendre symbol. We leave such pursuits for future work.
\end{remark}

The method of Golovchanski\u{\i}--Smotrov~\cite[Theorem~$1$]{GolovchanskiiSmotrov1999} is different from ours, and they delve into properties of the norms and traces, expressing $\Psi_{\Gamma}(x; p, a)$ as a linear combination of $\Psi_{\Gamma_{0}(2^{k})}(x)$ for some $k \geq 0$, for which an asymptotic formula is already known as in~\eqref{eq:Selberg}. For example, they derived a general linear combination of the shape
\begin{equation*}
3 \Psi_{\Gamma_{0}(N)}(x)-3 \Psi_{\Gamma_{0}(2N)}(x)+\Psi_{\Gamma_{0}(4N)}(x) = 3 \Psi_{\Gamma_{0}(N)}(x; 2, 1),
\end{equation*}
from which it follows that\footnote{We emphasise that the case of $p = 2$ is not contained in Theorem~\ref{main}.}
\begin{equation*}
\Psi_{\Gamma_{0}(N)}(x; 2, 1) = \frac{1}{3} \cdot x+\mathcal{E}_{\Gamma_{0}(N)}(x), \qquad 
\Psi_{\Gamma_{0}(N)}(x; 2, 0) = \frac{2}{3} \cdot x+\mathcal{E}_{\Gamma_{0}(N)}(x).
\end{equation*}
The level structure that they developed is delicate, and it appears that their idea only works for some specific values of $p$ and $a$. Hence, some new ideas are needed to prove Theorem~\ref{main}.

An elementary counting argument implies the following result. 

\begin{corollary}\label{corollary}
Let $\Gamma = \PSL_{2}(\Z)$, and let $p \geq 3$ be a prime. Then we have that
\begin{align*}
\sum_{\substack{a \pmod{p} \\ \left(\frac{a^{2}-4}{p} \right) = 1}} \Psi_{\Gamma}(x; p, a)
& = \frac{p-3}{2(p-1)} \cdot x+O_{p,\epsilon}(x^{\frac{3}{4}+\frac{\vartheta}{2}+\epsilon}),\\
\sum_{\substack{a \pmod{p} \\ \left(\frac{a^{2}-4}{p} \right) = -1}} \Psi_{\Gamma}(x; p, a)
& = \frac{p-1}{2(p+1)} \cdot x+O_{p,\epsilon}(x^{\frac{3}{4}+\frac{\vartheta}{2}+\epsilon}),\\
\sum_{\substack{a \pmod{p} \\ \left(\frac{a^{2}-4}{p} \right) = 0}} \Psi_{\Gamma}(x; p, a)
& = \frac{2p}{p^{2}-1} \cdot x+O_{p,\epsilon}(x^{\frac{3}{4}+\frac{\vartheta}{2}+\epsilon}),
\end{align*}
where $\vartheta$ is a subconvex exponent for quadratic Dirichlet $L$-functions.
\end{corollary}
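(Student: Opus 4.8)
The plan is to derive Corollary~\ref{corollary} directly from Theorem~\ref{main} by summing the asymptotic~\eqref{eq:asymptotic} over all residue classes $a \pmod p$ belonging to each of the three cases. Since the main term in~\eqref{eq:asymptotic} depends on $a$ only through the value of the Legendre symbol $\left(\frac{a^{2}-4}{p}\right)$, the sole genuine input is a count of how many residue classes fall into each case.

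First I would fix these counts. Writing $N_{1}$, $N_{-1}$, and $N_{0}$ for the number of residue classes $a \pmod p$ with $\left(\frac{a^{2}-4}{p}\right)$ equal to $1$, $-1$, and $0$ respectively, I note that $N_{0}$ counts the solutions of $a^{2} \equiv 4 \pmod p$, namely $a \equiv \pm 2$; since $p \geq 3$ these are distinct, so $N_{0} = 2$. For the remaining classes I would invoke the standard evaluation of a character sum over a quadratic polynomial: because the discriminant of $a^{2}-4$ equals $16$, a nonzero square modulo the odd prime $p$, one has $\sum_{a \pmod p} \left(\frac{a^{2}-4}{p}\right) = -1$. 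This sum is precisely $N_{1}-N_{-1}$, so together with $N_{1}+N_{-1}+N_{0} = p$ I obtain the linear system $N_{1}-N_{-1} = -1$ and $N_{1}+N_{-1} = p-2$, whence $N_{1} = \frac{p-3}{2}$ and $N_{-1} = \frac{p-1}{2}$.

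Next I would multiply each main term in~\eqref{eq:asymptotic} by its multiplicity: $\frac{p-3}{2} \cdot \frac{1}{p-1}$ in the split case, $\frac{p-1}{2} \cdot \frac{1}{p+1}$ in the inert case, and $2 \cdot \frac{p}{p^{2}-1}$ in the ramified case. These reproduce exactly the three claimed coefficients $\frac{p-3}{2(p-1)}$, $\frac{p-1}{2(p+1)}$, and $\frac{2p}{p^{2}-1}$. The error terms aggregate over at most $p$ residue classes, so the combined error remains $O_{p,\epsilon}(x^{\frac{3}{4}+\frac{\vartheta}{2}+\epsilon})$, the factor of $p$ being absorbed into the implied constant.

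The argument is entirely elementary and presents no serious obstacle; the only point demanding care is the character-sum identity $\sum_{a \pmod p} \left(\frac{a^{2}-4}{p}\right) = -1$, which rests on the discriminant $16$ being a nonzero square modulo $p$. Everything else is bookkeeping of the main and error terms already furnished by Theorem~\ref{main}.
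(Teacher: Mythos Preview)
Your proposal is correct and follows essentially the same route as the paper: both deduce the corollary from Theorem~\ref{main} by counting the residue classes $a \pmod p$ in each of the three cases and then summing. The only difference is cosmetic---the paper obtains $N_{1}=\tfrac{p-3}{2}$ via the parametrisation $a\equiv x+x^{-1}$, whereas you use the character-sum identity $\sum_{a}\bigl(\tfrac{a^{2}-4}{p}\bigr)=-1$ (which, incidentally, requires only that the discriminant $16$ be \emph{nonzero} modulo $p$, not that it be a square).
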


\begin{remark}\label{rem:combination}
Apart from the size of the error term, Corollary~\ref{corollary} resolves~\cite[Conjecture~$1$]{GolovchanskiiSmotrov1999} in the case of full level $N=1$, and again the method should work for $\Gamma = \Gamma_{0}(N)$ when $(N, p) = 1$.
\end{remark}

\subsection*{Acknowledgements}
The authors thank Mikhail Nikolaevich Smotrov for sending us the preprint~\cite{GolovchanskiiSmotrov1999}.

\section{Key propositions}\label{key-propositions}
This section prepares for the proof of Theorem~\ref{main}. Throughout, we follow \cite[Section~2]{SoundararajanYoung2013} closely.

Let $\Gamma=\PSL_{2}(\Z)$ as before. Sarnak~\cite[Proposition~1.4]{Sarnak1982} showed that the primitive hyperbolic conjugacy classes in $\Gamma$ correspond bijectively to the $\Gamma$-equivalence classes of primitive indefinite binary quadratic forms. We recall this correspondence briefly. For a given primitive quadratic form $ax^{2}+bxy+cy^{2}$ of discriminant $d>0$, the automorphs are the elements
\begin{equation*}
P(t, u) = \begin{pmatrix} \dfrac{t-bu}{2} & -cu \\ au & \dfrac{t+bu}{2} \end{pmatrix}\in\Gamma,
\end{equation*}
with $t^{2}-du^{2} = 4$ being a solution of the Pell equation. For $u$ nonzero, $P(t, u)$ is hyperbolic with norm $(t+u \sqrt{d})^{2}/4$ and trace $t$. Because $P(-t,-u)=P(t,u)$ holds in $\Gamma$, we shall restrict~to $t>0$ without loss of generality.
This is in harmony with our convention in Section~\ref{statement-of-main-results} that $\tr(P)>2$ for a hyperbolic element $P\in\Gamma$. If $(t_{d}, u_{d})$ denotes the fundamental solution of the Pell equation, then $P(t_{d}, u_{d})$ is a primitive hyperbolic matrix of norm $\epsilon_{d}^{2}$ and trace $t_{d}$. Moreover, every automorph $P(t,u)$ with $u>0$ (resp.\ $u<0$) is a unique positive (resp.\ negative) integral power of $P(t_d,u_d)$. Sarnak's bijection sends the quadratic form $ax^{2}+bxy+cy^{2}$ to the conjugacy class of $P(t_{d}, u_{d})$ in $\Gamma$. Thus, for a given discriminant $d>0$, there are $h(d)$ primitive hyperbolic conjugacy classes in $\Gamma$, each of norm $\epsilon_{d}^{2}$ and trace $t_{d}$.

Now every hyperbolic conjugacy class $\{P \}$ can be written uniquely as $\{P_{0}^n\}$ for $n \geq 1$ and a primitive hyperbolic conjugacy class $\{P_{0} \}$ (cf.~\cite{Huber1959}). Combining this with Sarnak's bijection described above, we obtain
\begin{equation*}
\Psi_{\Gamma}(x; p, a) = 2 \sum_{\substack{3 \leq t \leq X \\ t \equiv a \pmod p}} 
\sum_{t^{2}-du^{2} = 4} h(d) \log \epsilon_{d},
\end{equation*}
where $X$ abbreviates $\sqrt{x}+\frac{1}{\sqrt{x}}$, and $d>0$ (resp.\ $u>0$) runs through discriminants (resp.\ integers). The class number formula $h(d) \log \epsilon_{d} = \sqrt{d} L(1, \chi_{d})$, where $\chi_{d}$ is the not necessarily primitive quadratic Dirichlet character associated to the discriminant $d$, allows us to write
\begin{equation}\label{eq1}
\Psi_{\Gamma}(x; p, a) = 2 \sum_{\substack{3 \leq t \leq X \\ t \equiv a \pmod p}} \sum_{t^{2}-du^{2} = 4} \sqrt{d} L(1, \chi_{d}).
\end{equation}

For an arbitrary discriminant $\delta>0$, we define Zagier's $L$-series by (cf. \cite[(6) \& (3)]{SoundararajanYoung2013})
\[L(s, \delta)\coloneqq\sum_{du^{2} = \delta}L(s, \chi_{d}) u^{1-2s}=\sum_{q=1}^\infty\frac{\lambda_q(\delta)}{q^s},\]
where $d>0$ (resp.\ $u>0$) runs through discriminants (resp.\ integers). This series admits a transparent Euler product expansion (to be discussed below), while it simplifies \eqref{eq1} as
\begin{equation}\label{eq2}
\Psi_{\Gamma}(x; p, a) = 2 \sum_{\substack{3 \leq t \leq X \\ t \equiv a \pmod{p}}} \sqrt{t^{2}-4} L(1, t^{2}-4).
\end{equation}

If $\delta=Dl^2$, where $D>0$ is a fundamental discriminant and $l>0$ is an integer, then we obtain the following Euler product expansion of Zagier's $L$-series (cf. \cite[(2)]{SoundararajanYoung2013}):
\begin{align}
L(s, \delta)
\notag&=\sum_{u\mid l}L(s,\chi_{Dl^2/u^2})u^{1-2s}\\
\notag&=L(s,\chi_D)\sum_{u\mid l}u^{1-2s}\prod_{\bp\mid\frac{l}{u}}(1-\chi_D(\bp))\\
\label{eq4}&=\prod_{\bp}\left(\sum_{0\leq m<v_{\bp}(l)}\bp^{m(1-2s)}+\frac{\bp^{v_{\bp}(l)(1-2s)}}{1-\chi_D(\bp)\bp^{-s}}\right).
\end{align}
In particular, for fixed $\delta$, the arithmetic function $q\mapsto\lambda_q(\delta)$ is multiplicative. The idea of the proof of Theorem~\ref{main} is to group together certain values of $t$ in \eqref{eq2} such that the corresponding Zagier $L$-series $L(s,t^2-4)$ has a constant Euler factor at $\bp=p$. Thus we are led to consider $L(s,\delta)$ with its Euler factor at $\bp=p$ removed:
\[L^{p}(s, \delta)\coloneqq\sum_{\substack{q\geq 1\\(q,p)=1}}\frac{\lambda_{q}(\delta)}{q^{s}}.\]

\begin{proposition}\label{proposition-2}
Let $p\geq 3$ be a prime, and let $n\geq 1$ be an integer. Let $r\pmod{p^n}$ be an arbitrary residue class. If $(q, p) = 1$ and $b$ denotes the squarefree part of $q$, then
\begin{equation*}
\sum_{\substack{3\leq t \leq X \\ t \equiv r \pmod{p^n}}} \lambda_{q}(t^{2}-4)
 = \frac{X}{p^n} \cdot \frac{\mu(b)}{b}+O_{\epsilon}(q^{\frac{1}{2}+\epsilon}).
\end{equation*}
\end{proposition}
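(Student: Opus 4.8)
The plan is to exploit the multiplicativity of $q\mapsto\lambda_q(t^2-4)$ together with the coprimality $(q,p)=1$, which decouples the congruence $t\equiv r\pmod{p^n}$ from the $\ell$-adic data at the primes $\ell\mid q$. From the Euler product \eqref{eq4} one sees that, for fixed $q$, the function $t\mapsto\lambda_q(t^2-4)$ is periodic with period dividing $M'\coloneqq\prod_{\ell\mid q}\ell^{k_\ell+1}$, where $\ell^{k_\ell}\,\|\,q$; indeed the local factor at $\ell$ depends on $t^2-4$ only through $v_\ell(t^2-4)$ (capped at $k_\ell$) and the Legendre symbol of the prime-to-$\ell$ part, hence only on $t\bmod\ell^{k_\ell+1}$. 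Since $(M',p)=1$, I would group the $t$ into residue classes modulo $M\coloneqq p^nM'$ and count each class with the expected density $X/M+O(1)$. The contribution of the zero frequency is the main term
\[
\frac{X}{p^n}\prod_{\ell\mid q}A_{\ell^{k_\ell}},\qquad A_{\ell^k}\coloneqq\frac{1}{\ell^{k+1}}\sum_{c\bmod\ell^{k+1}}\lambda_{\ell^k}(c^2-4),
\]
so the first task is to evaluate the local averages $A_{\ell^k}$.

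To compute $A_{\ell^k}$ I would split the residues $c$ into the generic ones with $c\not\equiv\pm2\pmod\ell$, where $\ell\nmid c^2-4$ and hence $\lambda_{\ell^k}(c^2-4)=\left(\frac{c^2-4}{\ell}\right)^k$ by \eqref{eq4}, and the ramified ones with $c\equiv\pm2\pmod\ell$. For the generic residues the key input is the complete sum $\sum_{c\bmod\ell}\left(\frac{c^2-4}{\ell}\right)=-1$ (the quadratic $c^2-4$ having two distinct roots $\pm2$), which yields the generic average $\frac{\ell-2}{\ell}$ when $k$ is even and $-\frac1\ell$ when $k$ is odd. For the ramified residues one reads off from \eqref{eq4} the value of $\lambda_{\ell^k}(c^2-4)$ as a function of $w\coloneqq v_\ell(c\mp2)$; the only subtlety is that for certain $w$ this value equals $\pm\ell^{w/2}$ with sign given by the Legendre symbol of the unit part $(c\mp2)\ell^{-w}$, and summing over such $c$ makes these terms cancel because the symbol is equidistributed over the units. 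Carrying this out, I expect to find $A_{\ell^k}=1$ for $k$ even and $A_{\ell^k}=-\frac1\ell$ for $k$ odd, so that $\prod_{\ell\mid q}A_{\ell^{k_\ell}}=\prod_{\ell:\,2\nmid k_\ell}\left(-\frac1\ell\right)=\frac{\mu(b)}{b}$, which is exactly the claimed main term.

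For the error I would decompose each local factor as $\lambda_{\ell^{k}}(t^2-4)=g_\ell(t)+r_\ell(t)$, where $g_\ell(t)=\left(\frac{t^2-4}{\ell}\right)^{k}$ has period $\ell$ and $r_\ell$ is supported on $t\equiv\pm2\pmod\ell$, and then expand the product $\prod_{\ell\mid q}(g_\ell+r_\ell)$. The purely generic term $\prod_{\ell\mid q}g_\ell$ is an incomplete character sum of period the radical $\mathrm{rad}(q)=\prod_{\ell\mid q}\ell$, restricted to $t\equiv r\pmod{p^n}$; completing it and invoking square-root cancellation (the Weil bound $\sum_{c\bmod\ell}\big(\tfrac{c^2-4}{\ell}\big)^{k}e(hc/\ell)\ll\sqrt\ell$ for the complete Sali\'{e} sums at each $\ell$, with $e(x)=e^{2\pi ix}$) gives $O_\epsilon(\mathrm{rad}(q)^{1/2+\epsilon})$, uniformly in $p,n,r$. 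Each mixed or ramified term $\prod_{\ell\in S}r_\ell\prod_{\ell\notin S}g_\ell$ I would handle by fixing the set $S$ of ramified primes, restricting $t$ to the few residues mod $\prod_{\ell\in S}\ell$ forced by $t\equiv\pm2\pmod\ell$, and partitioning according to the valuations $v_\ell(t\mp2)$ for $\ell\in S$: on each valuation stratum $\prod_{\ell\in S}r_\ell$ is constant (or a constant times Legendre symbols of units) while $\prod_{\ell\notin S}g_\ell$ stays character-like, so P\'{o}lya--Vinogradov supplies the cancellation. Since $\lambda_{\ell^{k}}$ is bounded by $\ell^{k/2}$ but varies over only $O(k)=O(\log q)$ valuation levels, and the number of terms in the expansion is $2^{\omega(q)}=q^{o(1)}$, these contributions remain of size $O_\epsilon(q^{1/2+\epsilon})$.

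The main obstacle I anticipate is precisely this ramified analysis. Away from $t\equiv\pm2$ the coefficient reduces to a Dirichlet character and classical square-root cancellation applies immediately, but near $t\equiv\pm2$ the value $\lambda_{\ell^{k}}(t^2-4)$ grows like $\ell^{k/2}$ and is no longer a character, so one must track the valuation stratification carefully, confine the large values to few residues, and verify that the residual sign factors genuinely cancel. Once this is controlled, assembling the zero-frequency main term with the bounded error yields the asserted asymptotic with error $O_\epsilon(q^{1/2+\epsilon})$, uniform in $p$, $n$, and $r$.
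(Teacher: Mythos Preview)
Your approach is viable but genuinely different from the paper's. The paper does not work prime-by-prime from the Euler product \eqref{eq4}; instead it invokes the ready-made finite Fourier expansion
\[
\lambda_q(t^2-4)=\sum_{q_1^2q_2=q}\frac{1}{q_2}\sum_{k\pmod{q_2}}e\!\left(\frac{kt}{q_2}\right)S(k^2,1;q_2)
\]
from \cite[Lemma~2.3]{SoundararajanYoung2013}, which already exhibits $t\mapsto\lambda_q(t^2-4)$ as a trigonometric polynomial of period dividing $q$ with Kloosterman-sum Fourier coefficients. Summing over $t\equiv r\pmod{p^n}$ is then immediate: the zero frequency gives $\sum_{q_1^2q_2=q}\mu(q_2)/q_2=\mu(b)/b$ times $X/p^n$, and the nonzero frequencies are bounded by the Weil bound together with the geometric-series estimate for the incomplete exponential sum. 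No separation into generic and ramified regimes, and no valuation stratification, is needed.

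What each buys: the paper's route is essentially two lines once the SY identity is quoted, and it shows in particular that the true period divides $q$ (your $M'=\prod_{\ell\mid q}\ell^{k_\ell+1}$ is a harmless overestimate). Your route avoids quoting that identity and makes the origin of the constant $\mu(b)/b$ transparent as the product of the local averages $A_{\ell^k}$; the price is the ramified bookkeeping you flag, which amounts to reproving the SY identity by hand. If you pursue your line, note that for even $k$ the ``Sali\'e'' sum $\sum_{c}\bigl(\tfrac{c^2-4}{\ell}\bigr)^{k}e(hc/\ell)$ degenerates to a sum of two exponentials and is $O(1)$, while for odd $k$ the Weil bound gives $O(\sqrt{\ell})$; and on each ramified stratum the large value $\ell^{k_\ell/2}$ is offset by the density $\ell^{-w}$ of that stratum, which is what ultimately keeps the error at $O_\epsilon(q^{1/2+\epsilon})$.
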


\begin{proof}
It follows from~\cite[Lemma~2.3]{SoundararajanYoung2013} that
\[\lambda_{q}(t^{2}-4) = \sum_{q_{1}^{2} q_{2} = q} \frac{1}{q_{2}} 
\sum_{k \pmod{q_{2}}} e \left(\frac{kt}{q_{2}} \right) S(k^{2}, 1; q_{2}).\]
This leads to
\begin{equation*}
\sum_{\substack{3\leq t \leq X \\ t \equiv r \pmod{p^n}}} \lambda_{q}(t^{2}-4)
 = \sum_{q_{1}^{2} q_{2} = q} \frac{1}{q_{2}} \sum_{k \pmod{q_{2}}} S(k^{2}, 1; q_{2}) 
\sum_{\substack{3\leq t \leq X \\ t \equiv r \pmod{p^n}}} e \left(\frac{kt}{q_{2}} \right).
\end{equation*}
When $k \equiv 0\pmod{q_2}$, the inner sum over $t$ is $\frac{X}{p^n}+O(1)$, and $S(0, 1; q_{2}) = \mu(q_{2})$, yielding the expected main term. When $k \not\equiv 0\pmod{q_2}$, we apply the Weil bound for Kloosterman sums and the estimate
\begin{equation*}
\sum_{\substack{3\leq t \leq X \\ t \equiv r \pmod{p^n}}} e \left(\frac{kt}{q_{2}} \right) \ll \norm{\frac{kp^n}{q_{2}}}^{-1} \leq q_2,
\end{equation*}
where $\norm{\cdot}$ is the distance to the nearest integer. The proof is complete.
\end{proof}
Guided by Proposition~\ref{proposition-2} and \eqref{eq2}, we consider the sum
\begin{equation}\label{eq3}
\Psi_{\Gamma}^{\star}(x; p^n, r) \coloneqq 2 \sum_{\substack{3 \leq t \leq X \\ t \equiv r \pmod{p^n}}} \sqrt{t^{2}-4} L^{p}(1, t^{2}-4).\end{equation}
We shall deduce Theorem~\ref{main} from the following analogue of \cite[Theorem~3.2]{SoundararajanYoung2013}:

\begin{proposition}\label{proposition-3}
Let $p\geq 3$ be a prime, and let $n\geq 1$ be an integer. Let $r\pmod{p^n}$ be an arbitrary residue class. Then
\begin{equation}\label{eq:psidiff}
\Psi_{\Gamma}^{\star}(x+u; p^n, r)-\Psi_{\Gamma}^{\star}(x; p^n, r)
 = \frac{u}{p^n}+O_{\epsilon}(u^{\frac{1}{2}} x^{\frac{1}{4}+\frac{\vartheta}{2}+\epsilon}),\qquad \sqrt{x}\leq u\leq x.
\end{equation}
\end{proposition}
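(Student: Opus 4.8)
The plan is to prove \eqref{eq:psidiff} by transcribing the proof of \cite[Theorem~3.2]{SoundararajanYoung2013} to our situation, the only changes being the congruence condition $t\equiv r\pmod{p^n}$ and the suppression of the Euler factor at $\bp=p$ recorded in \eqref{eq4}. First I would rewrite the left-hand side, via \eqref{eq3}, as the short-interval sum
\[
\Psi_{\Gamma}^{\star}(x+u;p^n,r)-\Psi_{\Gamma}^{\star}(x;p^n,r)
=2\sum_{\substack{X<t\leq X'\\ t\equiv r\pmod{p^n}}}\sqrt{t^{2}-4}\,L^{p}(1,t^{2}-4),
\]
where $X=\sqrt{x}+1/\sqrt{x}$ and $X'=\sqrt{x+u}+1/\sqrt{x+u}$. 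Because $\sqrt{x}\leq u\leq x$, throughout the range of summation one has $t\asymp\sqrt{x}$, $X'-X\asymp u/\sqrt{x}$, the weight $\sqrt{t^{2}-4}\asymp\sqrt{x}$, and the number of admissible $t$ is $\asymp u/(p^{n}\sqrt{x})$.

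For the main term I would insert the Dirichlet series $L^{p}(1,\delta)=\sum_{(q,p)=1}\lambda_{q}(\delta)/q$ and isolate the diagonal. Feeding Proposition~\ref{proposition-2} into partial summation against the smooth weight $\sqrt{t^{2}-4}$, the diagonal assembles to
\[
2\left(\sum_{(q,p)=1}\frac{\mu(b)}{bq}\right)\frac{1}{p^{n}}\int_{X}^{X'}\sqrt{s^{2}-4}\,ds,
\]
in which $\int_{X}^{X'}\sqrt{s^{2}-4}\,ds=\tfrac{u}{2}+O(1)$, while the multiplicativity of $q\mapsto\lambda_{q}(\delta)$ coming from \eqref{eq4} yields the clean normalisation
\[
\sum_{(q,p)=1}\frac{\mu(b)}{bq}=\prod_{\ell\neq p}\left(\frac{1}{1-\ell^{-2}}-\frac{\ell^{-2}}{1-\ell^{-2}}\right)=1.
\]
Hence the diagonal equals $u/p^{n}+O(p^{-n})$; crucially this leading constant is independent of $r$, which is the source of the $r$-uniform main term.

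The heart of the matter is the off-diagonal, and here I would follow \cite{SoundararajanYoung2013} closely. One applies the approximate functional equation to $L^{p}(1,t^{2}-4)$, replacing it by a smoothed character sum of length $\asymp\sqrt{x}$ together with its dual, and then evaluates the sum over $t\equiv r\pmod{p^{n}}$ by Poisson summation. Since $(q,p)=1$, the congruence modulus $p^{n}$ and the Kloosterman modulus $q_{2}$ occurring in Proposition~\ref{proposition-2} combine by the Chinese Remainder Theorem into the single modulus $p^{n}q_{2}$, so that every estimate in \cite{SoundararajanYoung2013} survives with $q_{2}$ replaced by $p^{n}q_{2}$. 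The short-interval length $u/\sqrt{x}$ enters through Poisson and furnishes the factor $u^{1/2}$, the weight supplies $\sqrt{x}$, and the dual sum is controlled by a subconvex bound for quadratic Dirichlet $L$-functions, which at the relevant conductor $\asymp\sqrt{x}$ contributes $x^{\vartheta/2}$; multiplying these yields the error $O_{\epsilon}(u^{1/2}x^{1/4+\vartheta/2+\epsilon})$.

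The main obstacle I anticipate is the uniform bookkeeping of this off-diagonal analysis: one must check that Weil's bound and the subconvex bound apply uniformly after enlarging the modulus to $p^{n}q_{2}$, that suppressing the $p$-factor alters only bounded Euler factors, and that the length of the approximate functional equation is balanced against the short-interval length so as to land exactly on the exponent $\tfrac14+\tfrac{\vartheta}{2}$. None of these should introduce genuine new difficulties beyond \cite{SoundararajanYoung2013}, precisely because $p$ is coprime to every modulus that arises; the payoff is that all dependence on the residue class $r$, beyond the density $1/p^{n}$, is absorbed into the error term.
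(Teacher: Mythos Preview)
Your extraction of the main term is correct and matches the paper: Proposition~\ref{proposition-2} together with partial summation against $\sqrt{t^{2}-4}$ (or, as the paper does, against $t$ after writing $\sqrt{t^{2}-4}=t(1+O(t^{-2}))$) yields the diagonal $u/p^{n}$, and the Euler product you compute for the coefficient sum is exactly right.

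Where you diverge from both the paper and \cite{SoundararajanYoung2013} is in the treatment of the error. There is no approximate functional equation, no dual sum, and no Poisson summation in the argument. The actual mechanism is much simpler: one introduces a smooth truncation
\[
\Lambda_{V}^{p}(\delta)\coloneqq\sum_{(q,p)=1}\frac{\lambda_{q}(\delta)}{q}\,e^{-q/V},
\]
and a contour shift to $\Re s=-\tfrac12$ gives $L^{p}(1,\delta)=\Lambda_{V}^{p}(\delta)+O_{\epsilon}(\delta^{\vartheta+\epsilon}V^{-1/2})$, the subconvex exponent entering \emph{here}, through the bound $L^{p}(1+s,\delta)\ll_{\epsilon}\delta^{\vartheta+\epsilon}|s|^{A}$ on that line. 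Summed over $t$ in the short interval this tail contributes $O_{\epsilon}(ux^{\vartheta+\epsilon}V^{-1/2})$. For the truncated piece one simply feeds in Proposition~\ref{proposition-2}: its error term $O_{\epsilon}(q^{1/2+\epsilon})$, weighted by $q^{-1}e^{-q/V}$ and summed over $q$, produces $O_{\epsilon}(x^{1/2}V^{1/2+\epsilon})$. Balancing at $V=ux^{-1/2+\vartheta}$ gives \eqref{eq:psidiff}. The point is that the Kloosterman/Weil analysis is already entirely contained in Proposition~\ref{proposition-2}; nothing further is needed.

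Your plan as written has a gap in that you insert the full Dirichlet series for $L^{p}(1,\delta)$ and then appeal to Proposition~\ref{proposition-2} term by term, but the resulting error $\sum_{q}q^{-1}\cdot q^{1/2+\epsilon}$ diverges, which is precisely why a truncation parameter $V$ is required. Your proposed remedy (AFE and Poisson) could perhaps be made to work, but it is not what \cite{SoundararajanYoung2013} does, and it would entail redoing from scratch an analysis that the smoothing device handles in a few lines.
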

\begin{proof}
Let $\sqrt{x}\leq u\leq x$, and set
\[X \coloneqq \sqrt{x}+\frac{1}{\sqrt{x}}\qquad\text{and}\qquad X^{\prime} \coloneqq \sqrt{x+u}+\frac{1}{\sqrt{x+u}}.\]
From the definition \eqref{eq3}, it is clear that
\begin{align*}
\Psi_{\Gamma}^{\star}(x+u; p^n, r)-\Psi_{\Gamma}^{\star}(x; p^n, r)
& = 2 \sum_{\substack{X < t \leq X^{\prime} \\ t \equiv r \pmod{p^n}}} \sqrt{t^{2}-4} L^{p}(1, t^{2}-4)\\
& = \left(2+O \left(x^{-1} \right) \right) \sum_{\substack{X < t \leq X^{\prime} \\ t \equiv r \pmod{p^n}}} t L^{p}(1, t^{2}-4),
\end{align*}
because $\sqrt{t^{2}-4} = t(1+O(t^{-2}))$. We shall approximate $L^p(1, t^{2}-4)$ in terms of a suitable Dirichlet series. Let $V\geq 1$ be a parameter to be chosen later, and let
\[\delta = t^{2}-4 = D l^{2},\]
where $D>0$ is a fundamental discriminant and $l>0$ is an integer. Consider
\begin{equation*}
\Lambda_{V}^{p}(\delta) \coloneqq \sum_{\substack{q\geq 1\\(q,p)=1}} \frac{\lambda_{q}(\delta)}{q} e^{-\frac{q}{V}}.
\end{equation*}
Shifting the contour yields the expression
\begin{equation*}
\Lambda_{V}^{p}(\delta) = \int_{(1)} L^{p}(1+s, \delta) V^{s} \Gamma(s) \frac{ds}{2\pi i}
 = L^{p}(1, \delta)+\int_{(-\frac{1}{2})} L^{p}(1+s, \delta) V^{s} \Gamma(s) \frac{ds}{2\pi i}.
\end{equation*}
On the right-hand side, for some $A>0$,
\[L^{p}(1+s, \delta)\ll |L(1+s, \delta)|\ll_\epsilon|L(1+s,\chi_D)|l^\epsilon\ll_\epsilon\delta^{\vartheta+\epsilon}|s|^A,\qquad \Re s=-\frac{1}{2},\]
while $\Gamma(s)$ decays exponentially. It follows that
\[L^{p}(1, \delta)=\Lambda_{V}^{p}(\delta)+O_\epsilon( \delta^{\vartheta+\epsilon} V^{-\frac{1}{2}}),\]
and hence
\begin{equation*}
\Psi_{\Gamma}^{\star}(x+u; p^n, r)-\Psi_{\Gamma}^{\star}(x; p^n, r)
 = (2+O(x^{-1})) \sum_{\substack{X < t \leq X^{\prime} \\ t \equiv r \pmod{p^n}}} 
t \Lambda_{V}^{p}(t^{2}-4)+O_{\epsilon}(ux^{\vartheta+\epsilon} V^{-\frac{1}{2}}).
\end{equation*}

If $(q, p) = 1$ and $q = bc^{2}$ with $b$ squarefree, then Proposition~\ref{proposition-2} along with partial summation leads to
\begin{equation*}
2 \sum_{\substack{X < t \leq X^{\prime} \\ t \equiv r \pmod{p^n}}} t \lambda_{q}(t^{2}-4)
 = \frac{u}{p^n}\cdot \frac{\mu(b)}{b}+O_{\epsilon}(Xq^{\frac{1}{2}+\epsilon}).
\end{equation*}
It thus follows that
\begin{equation*}
2 \sum_{\substack{X < t \leq X^{\prime} \\ t \equiv r \pmod{p^n}}} t \Lambda_{V}^{p}(t^{2}-4)
 = \frac{u}{p^n} \sum_{\substack{b, c \geq 1 \\ (bc, p) = 1}} \frac{\mu(b)}{b^{2} c^{2}} e^{-\frac{bc^{2}}{V}}+O_{\epsilon}(XV^{\frac{1}{2}+\epsilon}).
\end{equation*}
A standard contour shift argument gives
\begin{equation*}
\sum_{\substack{b, c \geq 1 \\ (bc, p) = 1}} \frac{\mu(b)}{b^{2} c^{2}} e^{-\frac{bc^{2}}{V}}
 = \int_{(1)} V^{s} \Gamma(s) \frac{\zeta^{p}(2+2s)}{\zeta^{p}(2+s)} \frac{ds}{2\pi i}
 = 1+O(V^{-\frac{1}{2}}),
\end{equation*}
where $\zeta^{p}(s)$ is the Riemann zeta function with the Euler factor at $p$ removed. As a result,
\begin{equation*}
\Psi_{\Gamma}^{\star}(x+u; p^n, r)-\Psi_{\Gamma}^{\star}(x; p^n, r)
 = \frac{u}{p^n}+O_{\epsilon}(x^\frac{1}{2}V^{\frac{1}{2}+\epsilon}+ux^{\vartheta+\epsilon} V^{-\frac{1}{2}}).
\end{equation*}
Setting $V = u x^{-\frac{1}{2}+\vartheta}$ yields \eqref{eq:psidiff}.
\end{proof}

\begin{corollary}\label{corollary-2}
Let $p\geq 3$ be a prime, and let $n\geq 1$ be an integer. Let $r\pmod{p^n}$ be an arbitrary residue class. Then
\begin{equation}\label{eq:removal}
\Psi_{\Gamma}^{\star}(x; p^n, r) = \frac{x}{p^n}+O_{\epsilon}(x^{\frac{3}{4}+\frac{\vartheta}{2}+\epsilon}).
\end{equation}
\end{corollary}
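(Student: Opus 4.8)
The plan is to bootstrap the short-interval estimate of Proposition~\ref{proposition-3} into a full asymptotic by a dyadic telescoping argument. The key observation is that the difference formula \eqref{eq:psidiff}, applied with base point $y$ and increment $u=y$ (which lies on the upper boundary of the admissible range $\sqrt{y}\le u\le y$ as soon as $y\ge 1$), reads
\[
\Psi_{\Gamma}^{\star}(2y; p^n, r)-\Psi_{\Gamma}^{\star}(y; p^n, r)=\frac{y}{p^n}+O_{\epsilon}\bigl(y^{\frac{3}{4}+\frac{\vartheta}{2}+\epsilon}\bigr),
\]
so that a single invocation controls the passage from $y$ to $2y$ with an error that matches the target exponent.

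Next I would set $y_j\coloneqq x/2^j$ and let $J\coloneqq\lfloor\log_2 x\rfloor$, so that $1\le y_J<2$. Since the function $y\mapsto\sqrt{y}+1/\sqrt{y}$ is increasing on $[1,\infty)$, we have $\sqrt{y_J}+1/\sqrt{y_J}<\sqrt{2}+1/\sqrt{2}<3$, whence the defining sum \eqref{eq3} for $\Psi_{\Gamma}^{\star}(y_J; p^n, r)$ is empty and vanishes. Telescoping the displayed identity along $j=1,\dots,J$ (noting $y_{j-1}=2y_j$ and $y_j\ge 1$, so each step is legitimate) gives
\[
\Psi_{\Gamma}^{\star}(x; p^n, r)=\sum_{j=1}^{J}\Bigl(\Psi_{\Gamma}^{\star}(y_{j-1}; p^n, r)-\Psi_{\Gamma}^{\star}(y_j; p^n, r)\Bigr)=\sum_{j=1}^{J}\frac{y_j}{p^n}+O_{\epsilon}\Bigl(\sum_{j=1}^{J}y_j^{\frac{3}{4}+\frac{\vartheta}{2}+\epsilon}\Bigr).
\]

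Then I would evaluate the two sums separately. The main term is a finite geometric series,
\[
\sum_{j=1}^{J}\frac{y_j}{p^n}=\frac{x}{p^n}\sum_{j=1}^{J}2^{-j}=\frac{x}{p^n}\bigl(1-2^{-J}\bigr)=\frac{x}{p^n}+O\bigl(p^{-n}\bigr),
\]
since $x\,2^{-J}=y_J<2$; the $O(p^{-n})=O(1)$ defect is absorbed into the error. The error sum is likewise geometric, with the largest term dominating, so $\sum_{j\ge 1}y_j^{3/4+\vartheta/2+\epsilon}\ll_{\epsilon}x^{3/4+\vartheta/2+\epsilon}$ (the series converges for any positive exponent). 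Combining these estimates yields \eqref{eq:removal}.

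The only point demanding care is arranging the increments so that each application of Proposition~\ref{proposition-3} stays within the hypothesis $\sqrt{x}\le u\le x$; the doubling schedule $u=y_j$ sits exactly on the upper boundary and forces the lower constraint $y_j\ge 1$, which is precisely why the telescoping terminates at $y_J\in[1,2)$ rather than at $0$. Because the accumulated error is governed by a convergent geometric series whose implied constant depends only on $\epsilon$ (through $\vartheta$), the resulting bound is uniform in both $p$ and $n$, consistent with the stated $O_{\epsilon}$.
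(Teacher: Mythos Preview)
Your proof is correct and follows exactly the approach of the paper: set $u=x$ in Proposition~\ref{proposition-3} to obtain the doubling estimate, then telescope dyadically. You have simply spelled out in full the ``dyadic subdivision'' that the paper invokes in one line, including the verification that $\Psi_{\Gamma}^{\star}(y_J;p^n,r)=0$ at the terminal scale and that both the main-term defect and the accumulated error are controlled by convergent geometric series.
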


\begin{proof}
Setting $u = x$ in \eqref{eq:psidiff} yields
\begin{equation*}
\Psi_{\Gamma}^{\star}(2x; p^n, r)-\Psi_{\Gamma}^{\star}(x; p^n, r)
 = \frac{x}{p^n}+O_{\epsilon}(x^{\frac{3}{4}+\frac{\vartheta}{2}+\epsilon}).
\end{equation*}
Now \eqref{eq:removal} follows readily by a dyadic subdivision.
\end{proof}

\section{Proof of Theorem~\ref{main}}\label{proof-of-Theorem-1.1} 

We shall approximate the $t$-sum in \eqref{eq2}. As before, we write
\[\delta = t^{2}-4 = D l^{2},\]
where $D>0$ is a fundamental discriminant and $l>0$ is an integer.

If $a\not\equiv\pm 2\pmod{p}$, then for every $t$ participating in \eqref{eq2}, we have that $p\nmid l$ and
\[\chi_D(p)=\left(\frac{D}{p}\right)=\left(\frac{Dl^2}{p}\right)=\left(\frac{t^2-4}{p}\right)=\left(\frac{a^2-4}{p}\right).\]
By \eqref{eq4}, the corresponding Zagier $L$-series $L(s,t^2-4)$ factorises as
\[L(s,t^2-4)=\left(1-\left(\frac{a^{2}-4}{p}\right) p^{-s}\right)^{-1}L^p(s,t^2-4),\]
hence by \eqref{eq2} and \eqref{eq3}, it also follows that
\[\Psi_{\Gamma}(x; p, a) = \left(1-\left(\frac{a^{2}-4}{p}\right) p^{-1}\right)^{-1}\Psi_{\Gamma}^{\star}(x; p, a).\]
Applying \eqref{eq:removal}, we obtain the first two cases of \eqref{eq:asymptotic}.

If $a\equiv\pm 2\pmod{p}$, then we shall assume (without loss of generality) that $a=\pm 2$. We subdivide the $t$-sum in \eqref{eq2} according to the exponent of $p$ in the positive integer $t-a$:
\[\Psi_{\Gamma}(x; p, a)=\sum_{k=1}^\infty\Psi_{\Gamma}(x; p, a; k),\]
where
\[\Psi_{\Gamma}(x; p, a; k)\coloneqq 2\sum_{\substack{3 \leq t \leq X \\ v_p(t-a)=k}} \sqrt{t^{2}-4} L(1, t^{2}-4).\]
We shall approximate these pieces individually. Note that $\Psi_{\Gamma}(x; p, a; k)=0$ for $p^k>t-a$. Moreover, the condition $v_p(t-a)=k$ constrains $t$ to $p-1$ residue classes modulo $p^{k+1}$, and it also yields $v_p(t^2-4)=k$.

If $k=2n-1$ is odd, then $p\mid D$ and $v_p(l)=n-1$, hence by \eqref{eq4},
\[L(s,t^2-4)=\frac{1-p^{n(1-2s)}}{1-p^{1-2s}}L^p(s,t^2-4).\]
Using also \eqref{eq3} and \eqref{eq:removal}, we obtain
\begin{align}
\Psi_{\Gamma}(x; p, a; 2n-1)
\notag&=\frac{p-1}{p^{2n}}\cdot\frac{1-p^{-n}}{1-p^{-1}}\cdot x+O_{p,\epsilon}(x^{\frac{3}{4}+\frac{\vartheta}{2}+\epsilon})\\
\label{eq6}&=(p^{1-2n}-p^{1-3n})x+O_{p,\epsilon}(x^{\frac{3}{4}+\frac{\vartheta}{2}+\epsilon}).
\end{align}
It is important that the implied constant in the error term is independent of $n$.

If $k=2n$ is even, then $p\nmid D$ and $v_p(l)=n$, hence by \eqref{eq4},
\[L(s,t^2-4)=\left(\frac{1-p^{n(1-2s)}}{1-p^{1-2s}}+\frac{p^{n(1-2s)}}{1-\chi_D(p)p^{-s}}\right)L^p(s,t^2-4).\]
We can understand $\chi_D(p)$ by writing $t=a+p^{2n}r$. Indeed, then $t^2-4=2a p^{2n}r+p^{4n} r^2$, hence
\[\chi_D(p)=\left(\frac{D}{p}\right)=\left(\frac{Dl^2p^{-2n}}{p}\right)=\left(\frac{2ar}{p}\right).\]
This means that among the $p-1$ choices for $t\pmod{p^{2n+1}}$, half the time $\chi_D(p)$ equals $1$ and half the time it equals $-1$. Using also \eqref{eq3} and \eqref{eq:removal}, we obtain
\begin{align}
\Psi_{\Gamma}(x; p, a; 2n)
\notag&=\frac{p-1}{p^{2n+1}}\left(\frac{1-p^{-n}}{1-p^{-1}}+\frac{(1/2)p^{-n}}{1-p^{-1}}+\frac{(1/2)p^{-n}}{1+p^{-1}}\right)x+O_{p,\epsilon}(x^{\frac{3}{4}+\frac{\vartheta}{2}+\epsilon})\\
\label{eq7}&=\left(p^{-2n}-\frac{p^{-3n}}{p+1}\right)x+O_{p,\epsilon}(x^{\frac{3}{4}+\frac{\vartheta}{2}+\epsilon}).
\end{align}
Again, the implied constant in the error term is independent of $n$.

Summing up the pieces $\Psi_{\Gamma}(x; p, a; 2n-1)$ and $\Psi_{\Gamma}(x; p, a; 2n)$ for $1\leq n\leq\log(X+2)$, 
and inserting the approximations \eqref{eq6}--\eqref{eq7}, we deduce the asymptotic formula
\[\Psi_{\Gamma}(x; p, \pm 2)=c_px+O_{p,\epsilon}(x^{\frac{3}{4}+\frac{\vartheta}{2}+\epsilon}),\]
where
\[c_p\coloneqq\sum_{n=1}^\infty\left(p^{1-2n}-p^{1-3n}+p^{-2n}-\frac{p^{-3n}}{p+1}\right)=\frac{p}{p^2-1}.\]
This is the third case of \eqref{eq:asymptotic}, and the proof of Theorem~\ref{main} is complete.

\section{Proof of Corollary~\ref{corollary}}
There are $\frac{p-3}{2}$ residues $a \pmod{p}$ such that $a^{2}-4$ is a nonzero quadratic residue. Indeed, this occurs if and only if $a^{2}-4 \equiv b^{2} \pmod{p}$ for some $b \not \equiv 0 \pmod{p}$, which can be written as $(a+b)(a-b) \equiv 4 \pmod{p}$. Making the change of variables $a+b \equiv 2x \pmod{p}$ and $a-b \equiv 2x^{-1} \pmod{p}$, we obtain $a \equiv x+x^{-1} \pmod{p}$ and $b \equiv x-x^{-1} \pmod{p}$ with the condition $x\not\equiv -1,0,1\pmod{p}$. This restriction leaves $\frac{p-3}{2}$ different ways to choose $a \pmod{p}$. Since there are two solutions to $(\frac{a^{2}-4}{p}) = 0$, we conclude that there are $\frac{p-1}{2}$ residues $a \pmod{p}$ such that $a^{2}-4$ is a nonzero quadratic nonresidue. Now Corollary~\ref{corollary} is immediate from Theorem~\ref{main}.


\newcommand{\etalchar}[1]{$^{#1}$}

\end{document}